\newtheorem{theorem}{Theorem}[section]
\newtheorem{lemma}[theorem]{Lemma}
\newtheorem{example}{Example}
\newtheorem*{remark}{Remark}
\numberwithin{equation}{section}
\newcommand{\sm}{\left(\begin{smallmatrix}}
\newcommand{\esm}{\end{smallmatrix}\right)}
\newcommand{\mat}{\left(\begin{matrix}}
\newcommand{\emat}{\end{matrix}\right)}
\def\SL{\mathrm{SL}}
  \def\Z{\mathbb Z}
\newcommand{\beq}{\begin{eqnarray*}}
\newcommand{\eeq}{\end{eqnarray*}}
\newcommand{\beqn}{\begin{eqnarray}}
\newcommand{\eeqn}{\end{eqnarray}}
\newcommand{\ben}{\begin{enumerate}}
\newcommand{\een}{\end{enumerate}}
\begin{document}

\title{The Riemann hypothesis for period polynomials of modular forms}

\author{}

\address{}
\email{}

\author{Seokho Jin}

\address{School of Mathematics, Korea Institute for Advanced Study, Hoegiro 85, Dongdaemun-gu, Seoul 130-722, Korea}
\email{seokhojin@kias.re.kr}

\author{Wenjun Ma}

\address{School of mathematics, Shandong University, Jinan, Shandong, 250100, China}
\email{wenjunma.sdu@hotmail.com}

\author{Ken Ono}

\address{Department of Mathematics and Computer Science, Emory University, Atlanta, Georgia 30322}
\email{ono@mathcs.emory.edu}

\author{Kannan Soundararajan}
\address{Department of Mathematics, Stanford University, Stanford, California 94305}
\email{ksound@math.stanford.edu}

\dedicatory{For Don Zagier in celebration of his 65th birthday}

\begin{abstract}  The period polynomial $r_f(z)$ for an even weight $k\geq 4$ newform $f\in S_k(\Gamma_0(N))$  is the generating function for the critical values of $L(f,s)$.
It has a functional equation relating $r_f(z)$
to $r_f\left(-\frac{1}{Nz}\right)$.
We prove the Riemann Hypothesis for these polynomials: that  the zeros
of $r_f(z)$ lie on the circle  $|z|=1/\sqrt{N}$. 
 We prove that these zeros are equidistributed when either $k$ or $N$ is large.
\end{abstract}

\thanks{The first author thanks KIAS for its generous support. The second author thanks the China Scholarship Council for its generous support. The third author thanks the support of the Asa Griggs Candler Fund and the NSF. The fourth author thanks the support of the NSF, and the Simons Foundation for a Simons Investigator Grant. The authors thank YoungJu Choie, Yuri Manin, Ram Murty, Ken Ribet, Drew Sutherland, and Don Zagier for useful comments and discussions.
}
\keywords{Period polynomials, Modular forms}
\thanks{2010
 Mathematics Subject Classification: 11F11, 11F67}

\maketitle


\section{Introduction} \label{section1}

\noindent Let $f \in S_k(\Gamma_0(N))$ be a newform \cite{AL, Li}  of even weight $k$ and level $N$.  Associated to $f$ is its $L$-function $L(f,s)$, which has been normalized so that the completed $L$-function,
$$
\Lambda(f,s) := \Big(\frac{\sqrt{N}}{2\pi}\Big)^s \Gamma(s) L(f,s),   
$$ 
satisfies the functional equation $\Lambda(f,s) = \epsilon(f) \Lambda(f,k-s)$, with $\epsilon(f)= \pm 1$.  Recall that the completed $L$-function arises as a period integral of the newform $f$: 
\begin{equation} 
\label{1.1} 
\Lambda(f,s) = N^{s/2} \int_0^{\infty} f(iy) y^{s} \frac{dy}{y}.
\end{equation} 
The focus of this paper is the {\sl period polynomial} associated to $f$, the
degree $k-2$ polynomial
\begin{equation}\label{PolyDef}
r_f(z):=\int_0^{i\infty}f(\tau)(\tau-z)^{k-2}d\tau.
\end{equation}
Expanding $(\tau-z)^{k-2}$, and using \eqref{1.1}, we may also express the period polynomial by 
\begin{equation} 
\label{1.3} 
r_f(z) = i^{k-1} N^{-\frac{k-1}{2}}  \sum_{n=0}^{k-2} \binom{k-2}{n} (\sqrt{N}i z)^n \Lambda(f,k-1-n), 
\end{equation}
or equivalently as 
\begin{equation} 
\label{1.4} 
r_f(z) = -\frac{(k-2)!}{(2\pi i)^{k-1} } \sum_{n=0}^{k-2} \frac{(2\pi iz)^n}{n!} L(f,k-n-1). 
\end{equation} 
In other words, $r_f(z)$ is a generating function for the critical values $L(f,1)$, $L(f,2)$, $\dots$, $L(f,k-1)$. For general facts on period polynomials,
the reader is encouraged to see \cite{CPZ, Kn, KZ, PP, Z}; other papers broadly related to the themes of this paper are \cite{GMR, MSW}.

Using the functional equation $\Lambda(f,s) = \epsilon(f) \Lambda(f,k-s)$ in \eqref{1.3}, we find that 
$$ 
r_f(z)  = - i^{k} \epsilon(f) (\sqrt{N}z)^{\frac{k-2}{2}} r_f\Big(-\frac{1}{Nz}\Big), 
$$ 
so that if $\rho$ is a zero of $r_f(z)$ then so is $-1/(N\rho)$.  In analogy with the Riemann hypothesis, we may ask whether all the zeros of $r_f(z)$ lie on the circle $|\rho|=1/\sqrt{N}$.   For Hecke eigenforms on $\SL_2(\Z)$, this was recently established by El-Guindy and Raji \cite{ER}, who showed that the zeros of $r_f(z)$ (for $N=1$) are all on the unit circle $|z|=1$. Their work was inspired by the previous work by Conrey, Farmer and Imamo$\mathrm{\bar{g}}$lu \cite{CFI}, who proved an analogous result for odd period polynomials again for full level.  
We show that this ``Riemann hypothesis" holds in general for all newforms of 
weight at least $4$ and any level.

\begin{theorem}\label{NHecke} For any even integer $k$ at least $4$, and any level $N$, all of the zeros of the period polynomial $r_f(z)$ are on the circle $|z| =1/\sqrt{N}$.  
\end{theorem}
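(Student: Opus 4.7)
The plan is to split the integral defining $r_f(z)$ at the Fricke fixed point $\tau = i/\sqrt{N}$ and apply the Fricke involution $\tau\mapsto -1/(N\tau)$ (using $f(-1/(N\tau)) = \epsilon(f)\,N^{k/2}\,\tau^k\,f(\tau)$) to the integral over $[0,i/\sqrt{N}]$. Combining with the integral over $[i/\sqrt{N},i\infty]$ yields the single integral representation
\[
r_f(z) \;=\; \int_{i/\sqrt{N}}^{i\infty} f(\tau)\Big[(\tau-z)^{k-2} \;-\; \epsilon(f)\,N^{1-k/2}(1+Nz\tau)^{k-2}\Big]\,d\tau,
\]
in which $f$ decays exponentially along the contour and the two terms in the bracket compete.

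The key observation is the identity
\[
|1+Nz\tau|^2 \;-\; N|\tau-z|^2 \;=\; (N|z|^2-1)(N|\tau|^2-1),
\]
valid for $\tau = iy$ with $y>0$, which shows that on the critical circle $|z|^2 = 1/N$ the two competing terms have equal modulus. On this circle, for $z = e^{i\theta}/\sqrt{N}$, the sharper identity $\bar{z}(1+Nz\tau) = \tau + \bar{z} = -\overline{\tau-z}$ (using $\bar{\tau}=-\tau$) yields, for even $k$,
\[
\epsilon(f)\,N^{1-k/2}(1+Nz\tau)^{k-2} \;=\; \epsilon(f)\,e^{i(k-2)\theta}\,\overline{(\tau-z)^{k-2}},
\]
so the bracketed kernel equals $(\tau-z)^{k-2} - \epsilon(f)\,e^{i(k-2)\theta}\,\overline{(\tau-z)^{k-2}}$. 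After pulling out $e^{i(k-2)\theta/2}$, this reduces to $2|\tau-z|^{k-2}$ times either $i\sin\Phi$ (if $\epsilon(f)=+1$) or $\cos\Phi$ (if $\epsilon(f)=-1$), where $\Phi(y,\theta):=(k-2)(\arg(iy-z)-\theta/2)$. Consequently, whenever the Fourier coefficients $a_n$ of $f$ are real, $r_f(z(\theta))\,e^{-i(k-2)\theta/2}$ is a real multiple of the fixed unit complex number $1$ or $i$, and counting the zeros of $r_f$ on the critical circle reduces to counting the zeros of a real-valued oscillating integral in $\theta\in[0,2\pi)$.

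To count these zeros I would adapt the ideas of \cite{CFI,ER}. Since $\arg(iy-z) = \pi/2 + \cos\theta/(y\sqrt{N}) + O(1/y^2)$ as $y\to\infty$, the leading part of $\Phi(y,\theta)$ is $(k-2)(\pi-\theta)/2$, whose sine (or cosine) vanishes at exactly $k-2$ angles $\theta_m\in[0,2\pi)$. Using the exponential decay $f(iy)\ll e^{-2\pi y}$ together with Deligne's bound $|a_n| \leq d(n)\,n^{(k-1)/2}$, one would show that the sign of the full oscillating integral at interlaced test angles $\theta'_m$ matches the sign of this leading term. The intermediate value theorem then produces at least $k-2$ sign changes, and since $r_f$ is a polynomial of degree $k-2$ this forces all of its zeros to lie on the critical circle.

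The principal obstacle is making the zero-count step effective and uniform in $k$ and $N$: even at the peak $y\sim k/(2\pi)$ of the integrand, the phase correction to $(k-2)(\pi-\theta)/2$ is of order $1/\sqrt{N}$, so for small $N$ one must carefully bound the contribution of moderate $y$ without losing the interlacing. A secondary subtlety is the case of newforms with non-real Fourier coefficients, for which the reality argument breaks; here one would either appeal to Galois-orbit considerations, or transfer the problem to the self-reciprocal polynomial $H(w) := r_f(-iw/\sqrt{N})$, which satisfies $H(w) = \epsilon(f)\,w^{k-2}\,H(1/w)$, and show that the associated polynomial $P$ defined by $H(w) = w^{(k-2)/2} P(w + w^{-1})$ has all its roots real and in the interval $[-2,2]$.
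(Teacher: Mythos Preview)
Your reduction is correct and essentially recovers the paper's equation (1.6): on the critical circle, $r_f$ becomes (up to a unimodular factor) a real trigonometric polynomial, and your integral identity is an elegant alternative derivation of that fact. One minor correction: your worry about non-real Fourier coefficients is unfounded in this setting, since newforms on $\Gamma_0(N)$ with trivial nebentypus have real Hecke eigenvalues (the Hecke operators are self-adjoint for the Petersson inner product), so $f(iy)\in\mathbb{R}$ and the reality argument goes through without further work.

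The gap is that the reduction is the easy part; all of the content of the theorem is in the zero-count, and your proposal stops at a heuristic. The paper does \emph{not} find a single uniform argument. It proceeds in four regimes: weights $4$ and $6$ are handled explicitly, the weight-$6$ case already requiring the nontrivial inequality $\Lambda(f,5)+3\Lambda(f,3)>4\Lambda(f,4)$, proved zero-by-zero from the Hadamard product for $\Lambda(f,s)$; weights $8$ through $14$ invoke a classical theorem of P\'olya and Szeg\H{o} on trigonometric polynomials with monotone coefficients, which reduces to checking $\binom{2m}{m+j}\Lambda(f,\tfrac k2+j)\le\binom{2m}{m+j+1}\Lambda(f,\tfrac k2+j+1)$; larger weights are treated by approximating the $L$-value polynomial $P_f(z)$ by $z^m\exp(2\pi/(z\sqrt{N}))$ and bounding the remainder explicitly (your phase correction $(k-2)\cos\theta/(y\sqrt{N})$ at the saddle $y\sim(k-2)/(2\pi)$ is exactly the $2\pi(\cos\theta)/\sqrt{N}$ appearing there, so the main terms agree); and the finitely many $(k,N)$ not covered are verified by machine. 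Working with the oscillatory integral rather than the finite sum $P_f$ makes the error terms harder to control uniformly, and your sketch offers no mechanism for small $k$ and small $N$, where the $O(1/\sqrt{N})$ phase correction is not small --- precisely the regime that forces the paper into Hadamard factorization, P\'olya--Szeg\H{o}, and computation.
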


\begin{remark}
Period polynomials for weight 2 newforms $f$ are constant  multiples of  $L(f,1)$.
\end{remark}


\begin{example}
The period polynomial for the normalized Hecke eigenform $\Delta(z)\in S_{12}(\Gamma_0(1))$ is
\begin{displaymath}
\begin{split}
r_\Delta(z)=\omega_\Delta^+r_\Delta^+(z)+&\omega_\Delta^-r_\Delta^-(z)
\approx 0.114379i\left(\frac{36}{691}z^{10}-z^8+3z^6-3z^4+z^2-\frac{36}{691}\right)\\
&\ \ \ \ \ \ \ \ \ \ \ \ \ \ \ \ \ \ \ \ \ \ \ \ +0.00926927 (4z^9-25z^7+42z^5-25z^3+4z).
\end{split}
\end{displaymath}
All ten zeros of $r_{\Delta}(z)$ are on $|z|=1$.
\end{example}

\begin{example}
For the unique weight $4$ newform $f(z)=q-4q^3-2q^5+\cdots$ on $\Gamma_0(8)$, we have
\beq
L(f,1)\approx 0.3545006\dots, \ \ \
L(f,2)\approx 0.6900311\dots, \ \ \
L(f,3)\approx 0.8746953\dots,
\eeq
which in turn implies that
$r_{f}(z)\approx0.0564205361iz^2 + 0.0349573870z  - 0.00705256701815496i.$
The roots are $\approx \pm 0.17037672 + 0.30979311 i$, and their norms are $\approx 1/(2\sqrt{2})$.
\end{example}

\begin{remark} Manin \cite{M} has used the work of Conrey, Farmer and Imamo$\mathrm{\bar{g}}$lu \cite{CFI}  to construct zeta functions which satisfy the Riemann Hypothesis. He suggests that these polynomials  arise from non-Tate motives and geometric objects lying below ${\text {\rm Spec}} {\bf\ Z}$ but not over ${\bf F}_1$. Using the $P_f(z)$ defined below, one obtains further such polynomials {\it mutatis mutandis.}
\end{remark}

If the weight or level is large enough, then the zeros of $r_f$ are regularly spaced on the circle 
$|z|=1/\sqrt{N}$.  To state this conveniently, and for our later work, we shall put $m:=(k-2)/2$ throughout and 
define 
\begin{equation} 
\label{1.5} 
P_f(z) = \tfrac 12 \binom{2m}{m} \Lambda(f,\tfrac k2) + \sum_{j=1}^{m} \binom{2m}{m+j} \Lambda(f,\tfrac k2+j) z^j.
\end{equation} 
Then, using the functional equation, we see that 
\begin{equation} 
\label{1.6}
r_f\Big(\frac{z}{i\sqrt{N}}\Big) = i^{k-1} N^{-\frac{k-1}{2}} \epsilon(f) z^{m} \Big( P_f(z) + \epsilon(f) P_f\Big(\frac 1z\Big)\Big). 
\end{equation} 
Therefore, to understand the zeros of $r_f$, it is enough to understand the zeros of $P_f(z)+\epsilon(f) P_f(1/z)$, and Theorem \ref{NHecke} states that this function has all its zeros on the unit circle $|z|=1$.  If we restrict to the 
unit circle $|z|=1$, then $P_f(z) + \epsilon(f) P_f(1/z)$ is either a trigonometric cosine or a trigonometric sine polynomial (depending on whether $\epsilon(f)$ equals $1$ or $-1$), and our proof of Theorem \ref{NHecke} proceeds by finding the right number of sign changes as $z$ varies over the unit circle.  If $k$ or $N$ is large enough, the proof allows us to establish the following result on the location of the roots.

 \begin{theorem}\label{equi} 
 The following are true.
 \begin{itemize}
 \item[(i)] Suppose that $k=4$. If $\epsilon(f)=-1$, then the zeros of $r_f(z)$ are  $\pm i/\sqrt{N}.$ If $\epsilon(f)=1$ and $N$ is sufficiently large, then the zeros of $r_f(z)$
 are located at $\pm (1+O(N^{-\frac 14+\epsilon}))/\sqrt{N}$.   

\item[(ii)] If $k\ge 6$ and either $N$ or $k$ is large enough, then the roots of $r_f(z)$ may be written as 
$$
\frac{1}{i\sqrt{N}} \exp\Big( i \theta_\ell + O\Big(\frac{1}{2^k\sqrt{N}}\Big) \Big),
$$ 
where for $0\le \ell \le 2m-1$ we denote by $\theta_\ell$ the unique solution in $[0,2\pi)$ to the equation  
$$ 
m\theta_\ell - \frac{2\pi}{\sqrt{N}} \sin \theta_\ell = \begin{cases} 
\frac \pi2 +\ell \pi &\text{if  } \epsilon(f)=1\\ 
\ell \pi &\text{if  } \epsilon(f)=-1. \\
\end{cases}
$$
\end{itemize}
\end{theorem}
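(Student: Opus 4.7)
Using \eqref{1.6}, the task reduces to locating the zeros on $|z|=1$ of
\[
F(z)\;:=\;P_f(z)+\epsilon(f)\,P_f(1/z).
\]
A direct computation with ratios of coefficients of $P_f$ (using $k-2=2m$ together with the identity $\binom{2m}{j}\Gamma(k-1-j)/\Gamma(k-1)=1/j!$) shows that for $0\le j<m$
\[
\frac{[z^{m-j}]\,P_f(z)}{[z^m]\,P_f(z)}\;=\;\frac{1}{j!}\Bigl(\frac{2\pi}{\sqrt N}\Bigr)^{\!j}\frac{L(f,k-1-j)}{L(f,k-1)},
\]
with an extra factor $1/2$ at $j=m$. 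Setting $c_m:=\Lambda(f,k-1)$ and
\[
G(w)\;:=\;\sum_{j=0}^{m-1}\frac{L(f,k-1-j)}{L(f,k-1)}\frac{(2\pi w/\sqrt N)^j}{j!}+\frac{1}{2}\frac{L(f,k/2)}{L(f,k-1)}\frac{(2\pi w/\sqrt N)^m}{m!},
\]
one obtains the clean factorization $P_f(z)=c_m z^m G(1/z)$, with $G$ a perturbed truncation of $\exp(2\pi w/\sqrt N)$; the perturbations come only from the $L$-ratios $L(f,k-1-j)/L(f,k-1)$ and from the factor $1/2$ at $j=m$.

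The next step is to substitute the heuristic $G(w)\approx\exp(2\pi w/\sqrt N)$ into the formula for $F$ at $z=e^{i\theta}$, yielding
\[
F(e^{i\theta})\;\approx\;2c_m\,e^{(2\pi/\sqrt N)\cos\theta}\cdot\begin{cases}\cos\phi(\theta),&\epsilon(f)=+1,\\ i\sin\phi(\theta),&\epsilon(f)=-1,\end{cases}\qquad \phi(\theta):=m\theta-\tfrac{2\pi}{\sqrt N}\sin\theta.
\]
Under the hypothesis $k\ge 6$ with either $k$ or $N$ sufficiently large, $\phi'(\theta)=m-(2\pi/\sqrt N)\cos\theta\ge m-2\pi/\sqrt N>0$, so $\phi$ is strictly increasing on $[0,2\pi)$ and bijects onto $[0,2\pi m)$. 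Hence $\cos\phi$ (resp.\ $\sin\phi$) vanishes at exactly the $2m$ angles $\theta_\ell$ listed in the statement.

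The technical heart is a uniform bound for $|G(w)-\exp(2\pi w/\sqrt N)|$ on $|w|=1$, split into (a) the $L$-ratio defects for $0\le j<m$, handled via the Dirichlet series $L(f,s)=1+\sum_{n\ge2}a_n n^{-s}$ and Deligne's bound $|a_n|\le d(n)n^{(k-1)/2}$; (b) the $j=m$ correction, estimated by a convexity bound on $L(f,k/2)$ and the smallness of $(2\pi/\sqrt N)^m/m!$ by Stirling; and (c) the tail $\sum_{j>m}(2\pi w/\sqrt N)^j/j!$, dwarfed by (b). The crucial cancellation is telescoping: contributions from the prime $p=2$ organize into the combination $e^{4\pi w/\sqrt N}-e^{2\pi w/\sqrt N}$, whose magnitude is $O(1/\sqrt N)$ when $N$ is large (similarly for other primes), providing the extra $1/\sqrt N$ saving. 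I expect this telescoping step to be the main obstacle, since a naive application of Deligne's bound to $|\alpha_j-1|$ gives only half of the claimed exponential savings in $k$. Once the bound $|F(e^{i\theta})-(\text{main term})|\ll|c_m|/(2^k\sqrt N)$ is established, Theorem~\ref{NHecke} ensures that $F$ has exactly $2m$ zeros on $|z|=1$, and since the main term has derivative $\approx 2c_m m$ at each $\theta_\ell$, it dominates the error outside an interval of width $O(1/(2^k\sqrt N))$. A Rouch\'e or sign-change argument then places exactly one zero in each such interval, establishing part (ii).

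For part (i), where $k=4$ and $m=1$, a direct computation suffices. Here $P_f(z)=\Lambda(f,2)+\Lambda(f,3)z$; for $\epsilon(f)=-1$, $F(z)=\Lambda(f,3)(z-1/z)$ has roots $z=\pm1$ and hence $r_f$-zeros $\mp i/\sqrt N$. For $\epsilon(f)=+1$, clearing denominators in $F(z)=0$ gives $\Lambda(f,3)z^2+2\Lambda(f,2)z+\Lambda(f,3)=0$, whose unit-circle roots satisfy $\cos\theta=-\Lambda(f,2)/\Lambda(f,3)=-(\pi/\sqrt N)L(f,2)/L(f,3)$. The convexity bound $|L(f,2)|\ll N^{1/4+\epsilon}$ (with $|L(f,3)|$ bounded below) gives $\cos\theta\ll N^{-1/4+\epsilon}$, so $\theta=\pi/2+O(N^{-1/4+\epsilon})$ (and its mirror), translating to $r_f$-zeros $\pm(1+O(N^{-1/4+\epsilon}))/\sqrt N$.
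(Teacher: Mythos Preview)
Your overall architecture is the paper's: the same factorization $P_f(z)=c_m z^m G(1/z)$ with $G$ a perturbed truncation of $\exp(2\pi w/\sqrt N)$, the same monotone phase $\phi(\theta)=m\theta-(2\pi/\sqrt N)\sin\theta$ identifying the $\theta_\ell$'s, and the same sign-change localization once the error is shown to be small. Part~(i) is verbatim the paper's Section~3 argument.

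The one substantive divergence is your ``telescoping'' step. You are right that bounding each $L$-ratio defect $\alpha_j-1$ by Deligne gives only $O(2^{-(m-j)})$, and summing against $(2\pi/\sqrt N)^j/j!$ yields
\[
\sum_{j\ge 1}\frac{1}{j!}\Big(\frac{2\pi}{\sqrt N}\Big)^{j}\cdot O(2^{-(m-j)})
\;=\;O\!\Big(2^{-m}\big(e^{4\pi/\sqrt N}-1\big)\Big)
\;=\;O\!\Big(\frac{1}{2^m\sqrt N}\Big)
\]
for $N$ large. Your proposed rewriting of the $p=2$ contribution as $e^{4\pi w/\sqrt N}-e^{2\pi w/\sqrt N}$ does appear if you expand $G(w)-e^{2\pi w/\sqrt N}$ via the Dirichlet series (since $G(w)\approx L(f,k-1)^{-1}\sum_n a_n n^{-(k-1)}e^{2\pi nw/\sqrt N}$), but this difference of exponentials only supplies the factor $1/\sqrt N$ already present above; it does \emph{not} upgrade $2^{-m}$ to $2^{-k}$. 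No further saving in $k$ is available here.

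In fact the paper's own proof (see \eqref{6.4} and Section~\ref{Equidistribution}) establishes only $O(1/(2^m\sqrt N))$ with $m=(k-2)/2$; the $2^k$ in the statement of Theorem~\ref{equi} should be read as $2^m$. So you can simply drop the telescoping step. The paper's route to the $L$-ratio bound is Lemma~\ref{lem2.2}, giving $L(f,k{-}1{-}j)/L(f,k{-}1)\le \zeta(m{-}j{+}\tfrac12)^2/\zeta(m{+}\tfrac12)^2$, after which the elementary estimate $\zeta(x)^2-1\ll 2^{-x}$ delivers the $2^{-m}$ directly; this is equivalent to (and slightly cleaner than) your Dirichlet-series expansion. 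Your treatment of the $j=m$ central term via convexity and of the exponential tail via Stirling matches the paper's handling of $S_3$ and $S_2$.
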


Our arguments readily allow us to quantify the results in Theorem 1.2.  For example, the arguments in Section 6 
give that in part (ii) above, the implied $O$-constant may be taken as $10^9$, although this is a gross overestimate.  
The arguments in Section 5 locate sign changes even if the values of $k$ or $N$ are only moderately large.

Suppose that $\epsilon(f)=1$.  By counting sign changes, one consequence of Theorem \ref{1.1} is that $P_f(-1)$ has sign $(-1)^{m}$.  
In other words, if $\epsilon(f)=1$, then we must have 
\begin{equation} 
\label{1.7} 
 \frac 12 \binom{2m}{m} (-1)^m \Lambda(f,\tfrac k2) + \sum_{j=0}^{m-1} (-1)^j \binom{2m}{2m-j} \Lambda(f,k-1-j)    >0.  
\end{equation} 
For any weight $k$, this inequality is clear for large enough $N$ since the term $j=0$ above dominates all other terms.  
However it is interesting that such an inequality holds for all small weights and small level as well, and we wonder if it has any other 
significance.  In Section 4 we give a proof of this inequality in the weight $6$ case based on the Hadamard factorization formula.  We also 
give there a more illuminating proof of this inequality based on the Riemann hypothesis for $\Lambda(f,s)$.

\section{Preliminaries}\label{pre}

\noindent Here we collect  preliminary facts about $L$-functions which we shall 
find useful.  The completed $L$-function $\Lambda(f,s)$ is an entire function of order $1$.  Its zeros all 
lie in the strip $|\text{Re}(s) -\tfrac k2|<\tfrac 12$, with the Riemann hypothesis predicting that all zeros lie 
on the line Re$(s) =\tfrac k2$.  Recall also that the central value $\Lambda(f,\tfrac k2)$ is known to be 
non-negative by the work of Waldspurger \cite{W}.

Hadamard's factorization formula applies to the entire function $\Lambda(f,s)$, and we may write 
\begin{equation} 
\label{2.1} 
\Lambda(f,s) = e^{A+Bs} \prod_{\rho} \Big(1-\frac s{\rho}\Big) e^{s/\rho}. 
\end{equation} 
Here the product is over all the zeros of $\Lambda(f,s)$ (that is, the non-trivial zeros of $L(f,s)$), and $A$ and $B$ are constants.  Note that if $\rho$ is a zero then so too are $\overline{\rho}$ and $k-\rho$.   Since $\Lambda(f,s)$ is real-valued on the real line, and in view of the functional equation, we have that $B$ is real-valued and 
$$ 
B  = - \sum_{\rho} \text{Re} \frac{1}{\rho} = -\sum_{\rho} \frac{\text{Re }(\rho)}{|\rho|^2}. 
$$ 
These considerations also show that for real $s$
\begin{equation} 
\label{2.2} 
\Lambda(f,s) = e^A \prod_{\rho \in {\Bbb R}} \Big( 1-\frac{s}{\rho}\Big) \prod_{\text{Im}(\rho) >0} \Big|1-\frac{s}{\rho}\Big|^2,  
\end{equation} 
where we have paired the complex conjugate roots together so that the product is convergent.  

\begin{lemma}  \label{lem2.1} The function $\Lambda(f,s)$ is monotone increasing for $s\ge \tfrac k2 +\tfrac 12$.  
Moreover, we have
$$ 
0\le \Lambda(f,\tfrac k2) \le \Lambda(f,\tfrac k2+1) \le \Lambda(f,\tfrac k2+2 )\le \ldots .
$$ 
If $\epsilon(f)$ is $-1$, then $\Lambda(f,\tfrac k2) =0$ and 
$$ 
0\leq \Lambda(f,\tfrac k2+1) \le \frac 12 \Lambda(f,\tfrac k2 +2)\le \frac 13 \Lambda(f,\tfrac k2 +3) \le \ldots. 
$$
\end{lemma}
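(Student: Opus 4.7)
The plan is to analyze each factor of the real Hadamard factorization (2.2) separately, pairing zeros both under complex conjugation $\rho\mapsto\bar\rho$ and under the functional equation $\rho\mapsto k-\rho$. For the monotonicity assertion on $[k/2+1/2,\infty)$, I would take the logarithmic derivative of $\Lambda(f,s)$. A quartet of complex zeros $\{\rho,\bar\rho,k-\rho,k-\bar\rho\}$ with $\rho=\beta+i\gamma$ contributes
$$
\frac{2(s-\beta)}{(s-\beta)^2+\gamma^2}+\frac{2(s-(k-\beta))}{(s-(k-\beta))^2+\gamma^2},
$$
which is strictly positive for $s>k/2+1/2$ since every zero satisfies $\mathrm{Re}(\rho)\in(k/2-1/2,k/2+1/2)$; a pair of real zeros $(\rho,k-\rho)$ contributes $2(s-k/2)/((s-\rho)(s-(k-\rho)))$, likewise positive in this range. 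Since $\Lambda(f,s)$ has no zeros on $[k/2+1/2,\infty)$ and tends to $+\infty$ as $s\to\infty$, we conclude $\Lambda'(f,s)>0$ there, giving strict monotonicity.

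For the chain $0\le\Lambda(f,k/2)\le\Lambda(f,k/2+1)\le\cdots$, the inequalities $\Lambda(f,k/2+j)\le\Lambda(f,k/2+j+1)$ for $j\ge 1$ follow from the first part. The remaining inequality $\Lambda(f,k/2)\le\Lambda(f,k/2+1)$ is trivial when $\Lambda(f,k/2)=0$ (automatic if $\epsilon(f)=-1$); otherwise I compare factors of (2.2) directly. Each complex conjugate pair contributes $((s-\beta)^2+\gamma^2)/|\rho|^2$, larger at $s=k/2+1$ than at $s=k/2$ because $|k/2+1-\beta|>|k/2-\beta|$; each real zero pair $(k/2\pm\alpha)$ with $\alpha\in(0,1/2)$ contributes $(t^2-\alpha^2)/((k/2)^2-\alpha^2)$ at $s=k/2+t$, whose absolute value grows from $\alpha^2/((k/2)^2-\alpha^2)$ at $t=0$ to $(1-\alpha^2)/((k/2)^2-\alpha^2)$ at $t=1$ since $1-\alpha^2>\alpha^2$. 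The positivity $\Lambda(f,k/2)>0$ together with positivity of the complex factors forces the number of real zero pairs to be even, so signs match up and the product of ratios exceeds $1$.

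Finally, under $\epsilon(f)=-1$, the zero at $k/2$ has odd multiplicity $m\ge 1$, contributing a factor $(1-2s/k)^m$ in the Hadamard product whose value at $s=k/2+j$ equals $(-2j/k)^m$. Its ratio across $j\mapsto j+1$ is $(1+1/j)^m\ge(j+1)/j$ by Bernoulli. Every other factor has ratio at least $1$ and retains its sign between $t=j$ and $t=j+1$, since $(t+1)^2-\alpha^2>t^2-\alpha^2>0$ for $t\ge 1>\alpha$, and analogously for complex pairs. Multiplying, $\Lambda(f,k/2+j+1)/\Lambda(f,k/2+j)\ge(j+1)/j$, which rearranges to the claimed inequality. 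The most delicate step is the middle one, where individual real-pair factors flip sign between $s=k/2$ and $s=k/2+1$; the key observation is that Waldspurger's positivity $\Lambda(f,k/2)\ge 0$ guarantees the total number of sign flips is even, letting the factorwise comparison close.
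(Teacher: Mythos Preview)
Your argument is correct and rests on the same idea as the paper's proof: analyze the Hadamard product \eqref{2.2} factor by factor, using that every zero satisfies $|\mathrm{Re}(\rho)-k/2|<1/2$. The paper, however, avoids all of your sign-tracking in the middle step. Rather than pairing real zeros under $\rho\mapsto k-\rho$ and arguing that the number of sign flips between $s=k/2$ and $s=k/2+1$ is even, it simply notes that $|1-(k/2)/\rho|\le |1-(k/2+1)/\rho|$ for \emph{every} zero $\rho$ individually; taking the product over all zeros gives $|\Lambda(f,k/2)|\le |\Lambda(f,k/2+1)|$, and since both values are nonnegative (Waldspurger for the first, absence of zeros past $k/2+1/2$ for the second) this is already the desired inequality. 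Your route via parity of real-zero pairs works, but the absolute-value comparison makes it unnecessary. For the $\epsilon(f)=-1$ chain, the paper is terser than you (``apply the same reasoning and now take into account that there must be a zero of odd order at $k/2$''); your explicit computation with the factor $(1-2s/k)^{m}$ and Bernoulli is exactly what the paper intends. One small notational point: you use $m$ for the multiplicity of the central zero, clashing with the paper's fixed convention $m=(k-2)/2$.
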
 

Monotonicity results such as Lemma \ref{lem2.1} are familiar in the literature; for example, the Riemann hypothesis for $L$-functions 
is equivalent to the monotonicity of the absolute value of the completed $L$-function along horizontal lines starting from the critical line.  
In a different context Stark and Zagier observed a similar result  in \cite{SZ}.
\begin{proof} Since all the zeros lie in $|\text{Re}(s)-\frac k2 |<\frac 12$, we see that $|1-s/\rho|$ is increasing 
for $s\ge \frac k2 +\frac 12$.  So by \eqref{2.2} it follows that $\Lambda(f,s)$ is increasing in Re$(s)\ge \frac k2 +\frac 12$.  Further, we have
$$ 
\Big| 1- \frac{k/2}{\rho} \Big| \le \Big| 1- \frac{k/2+1}{\rho}\Big|, 
$$ 
and so $\Lambda(f,k/2) \le \Lambda(f,k/2+1)$.  When $\epsilon(f)=-1$, we apply the same reasoning and now take into account that there must be a zero of odd order at $\frac k2$.  
\end{proof}  

We record a useful inequality for $L$-values in the range of absolute convergence. 

\begin{lemma}  \label{lem2.2}  If $0 < a < b$ and $k$ is the weight of $f$, then we have 
$$ 
\frac{L(f,\frac{k+1}{2} +a)}{L(f,\frac{k+1}{2} +b)} \le \frac{\zeta(1+a)^2}{\zeta(1+b)^2}. 
$$ 
\end{lemma}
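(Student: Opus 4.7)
The plan is to pass to the Euler product of $L(f,s)$ in the region of absolute convergence $\mathrm{Re}(s)>(k+1)/2$ and compare it termwise with the Euler product of $\zeta(s)$ through a logarithmic expansion. For each prime $p$, the Euler factor of $L(f,s)$ has the form $\prod_j(1-\alpha_{p,j}p^{-s})^{-1}$ with at most two local roots $\alpha_{p,j}$, and Deligne's bound gives $|\alpha_{p,j}|\le p^{(k-1)/2}$. Setting $\tilde\alpha_{p,j}:=\alpha_{p,j}/p^{(k-1)/2}$, so that $|\tilde\alpha_{p,j}|\le 1$, the standard logarithmic expansion becomes
\[
\log L\!\left(f,\tfrac{k+1}{2}+t\right)\;=\;\sum_p\sum_j\sum_{n=1}^{\infty}\frac{\tilde\alpha_{p,j}^{\,n}}{n\,p^{n(1+t)}},
\]
valid for every real $t>0$ since $(k+1)/2+t$ lies in the region of absolute convergence.

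Next I would subtract this identity at $t=a$ from the one at $t=b$, obtaining
\[
\log\frac{L(f,\tfrac{k+1}{2}+a)}{L(f,\tfrac{k+1}{2}+b)}\;=\;\sum_p\sum_{n=1}^{\infty}\frac{\sum_j\tilde\alpha_{p,j}^{\,n}}{n}\bigl(p^{-n(1+a)}-p^{-n(1+b)}\bigr).
\]
The weights $p^{-n(1+a)}-p^{-n(1+b)}$ are strictly positive because $a<b$, and the coefficient $\sum_j\tilde\alpha_{p,j}^{\,n}$ is real (the local roots are either both real or complex conjugate) and bounded in absolute value by $2$. A termwise comparison with the corresponding expansion
\[
2\log\zeta(1+t)\;=\;2\sum_p\sum_{n=1}^{\infty}\frac{1}{n\,p^{n(1+t)}}
\]
therefore yields
\[
\log\frac{L(f,\tfrac{k+1}{2}+a)}{L(f,\tfrac{k+1}{2}+b)}\;\le\;2\bigl(\log\zeta(1+a)-\log\zeta(1+b)\bigr),
\]
and exponentiating gives the lemma.

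I do not foresee a serious obstacle. The only bookkeeping concerns the ramified primes, where one or both local roots may be missing (or smaller in modulus), but this only makes the per-prime bound $\bigl|\sum_j\tilde\alpha_{p,j}^{\,n}\bigr|\le 2$ easier to satisfy, so the argument carries through without change.
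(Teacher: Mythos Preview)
Your proof is correct and is essentially the same as the paper's. The paper phrases the argument via the logarithmic derivative, writing the ratio as $\exp\bigl(\int_a^b -\tfrac{L'}{L}(f,\tfrac{k+1}{2}+t)\,dt\bigr)$ and bounding the integrand using $|\Lambda_f(n)|\le 2n^{(k-1)/2}\Lambda(n)$, while you subtract the logarithmic Dirichlet series directly; by the fundamental theorem of calculus these are the same computation, and both rest on Deligne's bound in the identical way.
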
 
\begin{proof}  The Euler product for $L(f,s)$ gives rise to 
$$ 
-\frac{L^{\prime}}{L}(f,s)  = \sum_{n=1}^{\infty} \frac{\Lambda_f(n)}{n^s}, 
$$  
where $|\Lambda_f(n)| \le 2 n^{\frac{k-1}{2}}\Lambda(n)$ for all $n$.  Here $\Lambda(n)$ is the usual von Mangoldt function, and 
this estimate is an alternative way of encoding the Ramanujan bounds established by Deligne \cite{Del} (see also 
Li \cite{Li} for the Euler factors at primes dividing the level).   The point is that  for prime powers $n=p^r$ we have $\Lambda_f(n) = (\alpha_p^r + \beta_p^{r}) \log p$,
where the $p$th Fourier coefficient of $f$ satisfies $a(p) = \alpha_p + \beta_p$.

Therefore, we have
$$
\frac{L(f,\frac{k+1}{2}+a)}{L(f,\frac{k+1}{2}+b)} = \exp\Big(\int_a^b -\frac{L^{\prime}}{L}(f,\tfrac{k+1}{2} +t) dt \Big) 
\le \exp\Big( 2 \int_a^b -\frac{\zeta^{\prime}}{\zeta}(1+t) dt \Big) = \frac{\zeta(1+a)^2}{\zeta(1+b)^2}. 
$$ 
\end{proof}

\section{The weight $4$ case}  

\noindent If $f$ is a form of weight $k=4$ (so $m=(k-2)/2=1$), then 
$P_f(z) = \Lambda(f,2) + \Lambda(f,3)z$.
If $\epsilon(f)=-1$, then the roots of $P_f(z)-P_f(1/z)=\Lambda(f,3)(z-1/z)$ are at 
$z=\pm 1$ and so the period polynomial has roots at $\pm i/\sqrt{N}$.  

If $\epsilon(f)=1$, then with $z=e^{i\theta}$ we have
$P_f(z) + P_f(1/z) = 2 \Lambda(f,2) + 2 \Lambda(f,3) \cos \theta. 
$ 
Since $\Lambda(f,2) < \Lambda(f,3)$ by Lemma \ref{lem2.1}, the above equation has two solutions 
for $\theta \in [0,2\pi)$: namely, $\theta$ satisfying $\cos \theta = - \Lambda(f,2)/\Lambda(f,3)$. This completes the proof of Theorem \ref{NHecke} for weight $4$.   

 Note that $\Lambda(f,3) \gg N^{\frac 32}$ for large $N$, while the Phr{\' a}gmen-Lindel{\" o}f principle  gives
 $\Lambda(f,2) \le \max_{t\in {\Bbb R}} |\Lambda(f,\tfrac 52+\epsilon +it)| \ll N^{\frac 54+\epsilon}$ (this is the ``convexity bound" for $L$-functions).  
 Therefore  the ratio $\Lambda(f,2)/\Lambda(f,3)$ is small (precisely $\ll N^{-\frac 14+ \epsilon}$), and hence the corresponding values of $\theta$ tend to $\pi/2$ and $3\pi/2$.  Thus for large level, the zeros of the period polynomial (in the $\epsilon(f)=1$ case) are located at $\pm (1+O(N^{-\frac 14+\epsilon}))/\sqrt{N}$.  

\section{The weight $6$ case}  

\noindent If $f$ is a form of weight $k=6$ (so that $m=2$) then 
$$ 
P_f(z) = 3\Lambda(f,3) + 4 \Lambda(f,4) z + \Lambda(f,5) z^2. 
$$ 
If $\epsilon(f)=-1$, then we are interested in the roots of 
$$ 
P_f(z) - P_f(1/z) = \Big(z-\frac 1z\Big) \Big( 4\Lambda(f,4) + \Lambda(f,5) \Big(z +\frac{1}{z}\Big)\Big). 
$$  
Clearly there are two solutions $z=\pm 1$.  Since $\epsilon(f)=-1$, we know that 
$2\Lambda(f,4) < \Lambda(f,5)$ by Lemma \ref{2.1}, and so there are two solutions in $[0,2\pi)$ to $\cos \theta =
- 2\Lambda(f,4)/\Lambda(f,5)$.  Thus we have shown Theorem \ref{NHecke} in this case.  Moreover if $N$ is large, then $\Lambda(f,4)/\Lambda(f,5)$ is small and $\theta$ tends to $\pi/2$ or $3\pi/2$.   So for large $N$ (and odd sign)  the period polynomial has two zeros exactly at $\pm i/\sqrt{N}$ and the other two zeros are very near $\pm 1/\sqrt{N}$.  

It remains now to consider when $\epsilon(f)=1$.  With $z=e^{i\theta}$ we must show that 
 \begin{equation} 
 \label{4.1} 
 P_f(z) + P_f(1/z) = 2\cos (2\theta) \Lambda(f,5) + 8\cos \theta \Lambda(f,4) + 6\Lambda(f,3)  
 \end{equation} 
 has two zeros in $[0,\pi]$ (and therefore four zeros in $[0,2\pi)$).  
Differentiating the above with respect to $\theta$ gives $-8\sin \theta ( \Lambda(f,4)+ \cos \theta \Lambda(5))$ so that there are critical points at $\theta =0$, $\pi$, and at the solution $\theta_0 \in (0,\pi)$ to $\cos \theta =-\Lambda(f,4)/\Lambda(f,5)$.   We would like the quantity in \eqref{4.1} to be positive at $\theta=0$, negative at $\theta_0$ and positive again at $\theta=\pi$,  which would ensure two zeros in $(0,\pi)$ (and note that these conditions are also necessary for the period polynomial to have all zeros on a circle).

 The value at $\theta= 0$ is clearly positive.  That the value should be positive at $\pi$ is equivalent to 
 \begin{equation} 
 \label{4.2} 
 \Lambda(f,5) + 3\Lambda(f,3) > 4 \Lambda(f,4). 
 \end{equation}  
 The condition that the value should be negative at $\theta_0$  is equivalent to 
 \begin{equation}
 \label{4.3} 
 \Lambda(f,5)^2 + 2 \Lambda(f,4)^2 \ge 3\Lambda(f,3)\Lambda(f,5). 
 \end{equation} 
 
 \begin{lemma}  Suppose $a_1$, $a_2$, $b_1$, $b_2$, and $c_1$, $c_2$ are all positive with $a_i \ge \max(b_i,c_i)$.   Suppose 
 that $ a_i +\gamma c_i \ge (1+\gamma) b_i$, where  $\gamma$ is positive. 
Then $ a_1 a_2 + \gamma c_1 c_2 \ge (1+\gamma) b_1 b_2$.  
\end{lemma}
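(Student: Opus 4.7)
The plan is to recognize that the hypothesis $a_i + \gamma c_i \geq (1+\gamma) b_i$ is, after dividing through by $1+\gamma$, precisely the statement that $b_i$ is dominated by the convex combination $(a_i + \gamma c_i)/(1+\gamma)$. Since the target inequality $a_1 a_2 + \gamma c_1 c_2 \geq (1+\gamma) b_1 b_2$ has a single $b$-monomial on the right, I would begin by multiplying these two scalar bounds ($i=1$ and $i=2$) to obtain
\[
(1+\gamma)\, b_1 b_2 \;\leq\; \frac{(a_1+\gamma c_1)(a_2+\gamma c_2)}{1+\gamma}.
\]
This reduces the lemma to the cleaner, purely algebraic inequality
\[
(1+\gamma)\bigl(a_1 a_2 + \gamma c_1 c_2\bigr) \;\geq\; (a_1+\gamma c_1)(a_2+\gamma c_2),
\]
in which the variables $b_i$ no longer appear.

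I would then verify this inequality by brute expansion of both sides. The $a_1 a_2$ and $\gamma^2 c_1 c_2$ terms match on both sides; the surviving $\gamma$-linear part on the left is $\gamma(a_1 a_2 + c_1 c_2)$, while on the right it is $\gamma(a_1 c_2 + a_2 c_1)$. The difference is therefore
\[
\gamma\bigl(a_1 a_2 - a_1 c_2 - a_2 c_1 + c_1 c_2\bigr) \;=\; \gamma(a_1-c_1)(a_2-c_2),
\]
which is nonnegative by the hypothesis $a_i \geq \max(b_i,c_i)$, in particular $a_i \geq c_i$.

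There is essentially no analytic obstacle here; the whole assertion boils down to recognizing a convex-combination rewriting followed by one symmetric algebraic identity. One small remark I would make while writing up: the bound $a_i \geq b_i$ is not actually invoked in this argument (only $a_i \geq c_i$ is), so it appears to be included in the statement only for convenient application later, presumably to inequalities \eqref{4.2} and \eqref{4.3} where the monotonicity results of Lemma \ref{lem2.1} will supply the required comparisons among the $\Lambda(f,\tfrac{k}{2}+j)$'s.
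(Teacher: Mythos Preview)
Your proof is correct, and it takes a genuinely different route from the paper's. The paper multiplies only the $i=1$ hypothesis through by $b_2$, so that it suffices to show $a_1 a_2 + \gamma c_1 c_2 \ge a_1 b_2 + \gamma c_1 b_2$, i.e.\ $a_1(a_2-b_2)\ge \gamma c_1(b_2-c_2)$; this is then obtained from $a_1\ge c_1$, $a_2\ge b_2$, and the $i=2$ hypothesis. Your argument instead multiplies the two hypotheses together and reduces to the symmetric inequality $(1+\gamma)(a_1a_2+\gamma c_1c_2)\ge (a_1+\gamma c_1)(a_2+\gamma c_2)$, whose difference factors cleanly as $\gamma(a_1-c_1)(a_2-c_2)\ge 0$. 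Your approach is more symmetric and, as you note, never invokes $a_i\ge b_i$; the paper's asymmetric argument does use $a_2\ge b_2$, so your version actually establishes a marginally stronger statement.
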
 
\begin{proof} Multiply the relation $ a_1 + \gamma c_1 \ge (1+\gamma) b_1$ by $b_2$.  It suffices to show that 
$$ 
 a_1 a_2 + \gamma c_1 c_2 \ge  a_1 b_2 + \gamma c_1 b_2; 
$$ 
or, rearranging that 
$
a_1 (a_2 -b_2) \ge \gamma c_1(b_2 -c_2). 
$
Since $(a_2 -b_2)\geq 0$, and $a_1 \ge c_1$, the LHS above is at least $ c_1 (a_2 -b_2)$ which is 
$\ge \gamma c_1(b_2-c_2)$.  
\end{proof} 
 
 \begin{proof}[Proof of \eqref{4.2}]   We use Lemma 4.1 suitably, together with the Hadamard factorization formula (\eqref{2.1} and \eqref{2.2}), proceeding zero by zero.     
 We use the Hadamard formula for $\Lambda(f,3)$, $\Lambda(f,4)$ and $\Lambda(f,5)$; note that at all these values $\Lambda$ is known to be non-negative (This is clear for $4$ and $5$, and work of Waldspurger for $3$.), so we can also assume that the products are taken with absolute values.
   
 Suppose first that $\rho=3+z$ is a real zero, and then $6-\rho=3-z$ is also a real zero.  
 (Note that even if $\rho =3$, we get zeros of even multiplicity at the center, which may be paired.)   Then note that 
 this pair of zeros contributes to $\Lambda(f,5)$ the amount $a=(4-z^2)/(9-z^2)$,  to $\Lambda(f,4)$ the amount $b=(1-z^2)/(9-z^2)$, and to $\Lambda(f,3)$ the amount $c=z^2/(9-z^2)$  (using here the absolute value remark).  Note that with $\gamma =3$ we have the inequality $a+3c \ge 4b$.  
 
 Now consider a zero $\rho = 3+ iy$ on the critical line, and pair it with its conjugate $3-iy$.  These 
 contribute to $\Lambda(f,5)$ the amount $a= (4+y^2)/(9+y^2)$, to $\Lambda(f,4)$ the amount $b=(1+y^2)/(9+y^2)$ and 
 to $\Lambda(f,3)$ the amount $c=y^2/(9+y^2)$, and we check again that $a+ 3c\ge 4b$ (and indeed equality holds).  
 
 Finally consider a zero $\rho = 3+z$ not on the critical line with $z= x+iy$. This comes in a set of four zeros $3 \pm x \pm iy$.  
 Note that these four zeros contribute (multiply through by $|\rho|^2|6-\rho|^2$) to $\Lambda(f,5)$ an amount $a= |4-z^2|^2$, to $\Lambda(f,4)$ an amount $b=|1-z^2|^2$, and to $\Lambda(f,3)$ the amount $c=|z^2|^2$.  We can check again that $a+3c \ge 4b$.  
 
 Thus, when grouped as above, each group of zeros appearing in the Hadamard formula satisfies a version of \eqref{4.2}.  
 By Lemma 4.1, taking products of these groups of zeros we again obtain a version of \eqref{4.2}.   Letting these products 
 run over all zeros and taking the limit, we obtain \eqref{4.2}.
 \end{proof} 
 
 \begin{proof}[Proof of \eqref{4.3}]  This proof is similar, appealing to Lemma 4.1 with $\gamma =2$ and using Hadamard's formula 
 and grouping zeros as before.  
 \end{proof}

 The inequality \eqref{4.2} is implied by the usual Riemann Hypothesis for $\Lambda(f,s)$.   Note that RH for $\Lambda(f,s)$ implies also 
 that the derviatives $\Lambda^{(j)}(f,s)$ satisfy RH.   Moreover, at the central point one sees that $\Lambda^{(j)}(f,3)=0$ for all 
 odd $j$, and that $\Lambda^{(j)}(f,3) \ge 0$ for all even $j$.  Therefore, taking Taylor expansions around $3$, we see that 
 $$ 
 \Lambda(f,5) + 3\Lambda(f,3) = 4 \Lambda(f,3) + \sum_{j=1}^{\infty} \frac{\Lambda^{(2j)}(f,3)}{(2j)!} 2^{2j} 
 \ge 4 \Lambda(f,3) + 4 \sum_{j=1}^{\infty} \frac{\Lambda^{(2j)}(f,3)}{(2j)!} = 4\Lambda(f,4). 
 $$ 
 This reasoning in general explains why the period polynomial has the right sign at $\pi$ (see \eqref{1.7}). 
 
\section{Weights between $8$ and $14$:  Applications of results of P{\' o}lya and Szeg{\" o}} 
 
\noindent Classical work of P{\' o}lya \cite{Polya} and Szeg{\" o} \cite{Szego} considers trigonometric polynomials 
 \begin{displaymath}
 \begin{split}
 u(\theta) &= a_0 + a_1 \cos \theta +a_2 \cos (2\theta) + \ldots +a_n \cos (n\theta), \\
 v(\theta) &=   a_1 \sin \theta + a_2 \sin (2\theta) + \ldots + a_n \sin (n\theta).
 \end{split}
 \end{displaymath}
 If $0\le a_0 \le a_1 \leq a_2 \ldots \le a_{n-1} < a_n$, then Szeg{\" o} showed that $u$ and $v$ both 
 have exactly $n$ zeros in $[0,\pi)$ and that these zeros are simple.  Each interval $(\frac{\ell-\frac 12}{n+\frac 12} \pi, \frac{\ell+\frac 12}{n+\frac 12 }\pi)$ 
 for $\ell =1$, $\ldots$, $n$ has precisely one zero of $u$, and apart from $\theta=0$, each interval $(\frac{\ell}{n+\frac 12}\pi, \frac{\ell+1}{{n+\frac 12}} \pi)$ 
 for $1\le \ell\le n-1$ has exactly one zero of $v$.  His proof is a simple sign change argument using the positivity of the 
 Fej\'er kernel.  
 
 When the level is suitably large, these results apply and provide a quick 
 proof of Theorem~\ref{NHecke}.
 For weight $k$, for Szeg{\" o}'s 
 theorem to apply we must verify the criteria 
 \begin{equation} 
 \label{5.1} 
 \binom{2m}{m} \Lambda(f,\tfrac k2 ) \le 2 \binom{2m}{m+1} \Lambda(f,\tfrac k2 +1), 
 \end{equation}
 and for all $1\le j\le m-1$ that 
 \begin{equation} 
 \label{5.2} 
 \binom{2m}{m+j} \Lambda(f,\tfrac k2 +j) \le \binom{2m}{m+j+1} \Lambda(f,\tfrac k2 + j+1).
 \end{equation} 
 
 Since $\Lambda(f,\tfrac k2) \le \Lambda(f,\tfrac k2 +1)$, the condition \eqref{5.1} is immediate for all $k\ge 4$.  
 Now suppose $k\ge 6$.  Using the definition of $\Lambda$, and simplifying a little, the condition \eqref{5.2} 
 becomes (for $1\le j\le m-1$) 
 $$
 \sqrt{N} \ge \frac{2\pi}{(k/2-j-1)} \frac{L(f,\tfrac k2 +j)}{L(f,\tfrac k2 +j+1)}, 
 $$
 and by Lemma \ref{lem2.2} we conclude that our criterion \eqref{5.2} is met if 
 \begin{equation} 
 \label{5.3} 
 N \ge \max_{1\le j\le k/2-2} \Big(\frac{2\pi}{k/2-j-1} \Big)^2 \frac{\zeta(j+1/2)^4}{\zeta(j+3/2)^4}. 
 \end{equation}  
 
 For any given $k$, we can compute the bound \eqref{5.3}.  Thus, for 
 $k=8$, it suffices to take $N\ge 142$; for $k=10$ it suffices to have $N\ge 64$; for $k=12$ it suffices to have 
 $N\ge 45$;  for $k=14$ it suffices to have $N\ge 42$.   We have used {\text {\tt sage}} to check (\ref{5.2}) for those newforms not covered by (\ref{5.3}) for weights $8\leq k\leq 14$. The zeros of those newforms which do not satisfy (\ref{5.2}) still lie on $|z|=1/\sqrt{N}$.

\begin{remark}
Eventually, this cannot furnish a bound better than $4\pi^2$ for $N$, and so we must turn to another approach for large $k$ and small $N,$ which we carry out in the next section.  
 \end{remark}

  \section{Larger weights:  A second approach}  
  
  \noindent Here we consider larger weights by reformulating the previous approach of \cite{CFI} and \cite{ER}.
  Recast the definition \eqref{1.5} of $P_f(z)$ as 
  $$
  P_f(z) = (2m)! \Big(\frac{\sqrt{N}}{2\pi}\Big)^{2m+1} L(f,2m+1) Q_f(z),
  $$ 
  where 
  \begin{equation} 
  \label{6.1} 
  Q_f(z) =z^m \sum_{j=0}^{m-1} \frac{1}{j!} 
  \Big(\frac{2\pi }{z\sqrt{N}}\Big)^j \frac{L(f,2m+1-j)}{L(f,2m+1)} + \frac{1}{2(m!)^2} \Big(\frac{2\pi}{\sqrt{N}}\Big)^{2m+1} \frac{\Lambda(f,\frac k2)}{L(f,2m+1)}. 
  \end{equation}
  We wish to show that on the unit circle $|z|=1$, the real and imaginary parts of $Q_f(z)$ (which correspond to the even and odd signs of the functional equation) have exactly $2m$ zeros.  
  
  Now let us write 
  $$ 
  Q_f(z) = z^m \exp\Big( \frac{2\pi}{z\sqrt{N}}\Big) + S_1(z) + S_2(z) + S_3(z), 
  $$ 
  with 
  $$ 
  S_1(z) = z^m \sum_{j=1}^{m-1} \frac{1}{j!} \Big(\frac{2\pi}{z\sqrt{N}}\Big)^j \Big( \frac{L(f,2m+1-j)}{L(f,2m+1)}-1\Big),
  $$
  $$
  S_2(z) = - z^m\sum_{j=m}^{\infty}  \frac{1}{j!} \Big(\frac{2\pi}{z\sqrt{N}}\Big)^j, \qquad \text{and} \qquad S_3(z) = 
  \frac{1}{2(m!)^2} \Big(\frac{2\pi}{\sqrt{N}}\Big)^{2m+1} \frac{\Lambda(f,\tfrac k2)}{L(f,2m+1)}. 
  $$
  
  For $z=e^{i\theta}$ on the unit circle, the argument of $z^m \exp(2\pi/(z\sqrt{N}))$ is 
  $m\theta -2\pi (\sin \theta)/\sqrt{N},$ which is monotone increasing as $\theta$ varies from $0$ to $2\pi$, and changes by $2\pi m$ overall.  Therefore the real and imaginary parts of $z^m \exp(2\pi/(z\sqrt{N}))$ both have exactly $2m$ zeros.   More precisely, consider first the real part of $z^m \exp(2\pi/(z\sqrt{N})) 
  = \cos(m\theta-2\pi (\sin \theta)/\sqrt{N}) \exp(2\pi (\cos\theta)/\sqrt{N})$, and clearly we can find $m$ values of 
  $\theta$ with $\cos(m\theta-2\pi (\sin \theta)/\sqrt{N}) =1$ and $m$ interlacing values where it is $-1$.  Between 
  two such interlacing values there must be a zero of the real part.  Further, since $\exp(2\pi (\cos \theta)/\sqrt{N}) \ge 
  \exp(-2\pi/\sqrt{N})$ for all $\theta$, if 
  \begin{equation} 
  \label{6.2} 
  |S_1(z) + S_2(z) + S_3(z)| < \exp\Big( -\frac{2\pi}{\sqrt{N}}\Big),  
 \end{equation} 
 then the real part of $Q_f(z)$ will also have sign changes and thus a zero in these intervals.  A similar argument applies to the imaginary part of $Q_f(z)$, and so it suffices to check the criterion \eqref{6.2}.  
 
  Now by Lemma \ref{lem2.2} we see that $L(f,2m+1-j)/L(f,2m+1)-1 \le \zeta(\frac 12+m-j)^2-1$ so that 
  $$ 
  |S_1(z)+S_2(z)| \le \sum_{j=1}^{m-1} \frac{1}{j!} \Big(\frac{2\pi}{\sqrt{N}}\Big)^{j} (\zeta(\tfrac 12+m-j)^2-1)+ \sum_{j=m}^{\infty} 
  \frac{1}{j!} \Big(\frac{2\pi}{\sqrt{N}}\Big)^j. 
  $$ 
  For the term $j=m-1$, note that $\zeta(\frac 32)^2-1 \le \frac{35}{6}$ by direct computation.  Note that for $2^x (\zeta(\frac 12+x)^2-1)$ is decreasing in $x\ge 2$ and so may be bounded by $4(\zeta(5/2)^2-1) \le \frac{16}{5}$.  Using this observation for smaller values of $j$, we obtain 
  $$ 
  |S_1(z)+S_2(z)| \le \frac{16}{5} \sum_{j=1}^{m-1} \frac{1}{j!}   \Big(\frac{2\pi}{\sqrt{N}}\Big)^{j} \frac{2^j}{2^m} 
  + \frac{17}{4} \frac{1}{(m-1)!} \Big(\frac{2\pi}{\sqrt{N}}\Big)^{m-1} + \sum_{j=m}^{\infty} \frac{1}{j!} 
 \Big(\frac{2\pi}{\sqrt{N}}\Big)^{j} \frac{2^j}{2^m}. 
 $$
 Combining the first and third terms, we conclude that 
  \begin{equation} 
  \label{6.3}
  |S_1(z) + S_2(z)| \le \frac{16}{5} 2^{-m} \Big(\exp\Big(\frac{4\pi}{\sqrt{N}}\Big)-1\Big)  + 
  \frac{17}{4} \frac{1}{(m-1)!} \Big(\frac{2\pi}{\sqrt{N}}\Big)^{m-1}. 
  \end{equation} 
  
  To bound $S_3(z)$, note that 
  $\Lambda(f,\frac k2) \le \Lambda(f,\frac{k}{2}+1) \le (\frac{\sqrt{N}}{2\pi})^{m+2}(m+1)!\zeta(\frac 32)^2,$ 
  so  for $m\ge 7$ we have 
  $$ 
  |S_3(z)| \le \frac{m+1}{2(m!)} \Big( \frac{2\pi}{\sqrt{N}}\Big)^{m-1} \zeta(\tfrac 32)^2 \le \frac{4}{(m-1)!} \Big(\frac{2\pi}{\sqrt{N}}\Big)^{m-1}. 
   $$
Combining this with \eqref{6.3}, we conclude that 
\begin{equation} 
\label{6.4} 
|S_1(z)|+|S_2(z)|+|S_3(z)| \le \frac{16}{5} \frac{1}{2^m} \Big(\exp\Big(\frac{4\pi}{\sqrt{N}}\Big)-1\Big) + \frac{33}{4} \frac{1}{(m-1)!} 
\Big(\frac{2\pi}{\sqrt{N}}\Big)^{m-1}. 
\end{equation} 

Thus, to verify the condition \eqref{6.2}, we need only ensure that 
\begin{equation} 
\label{6.5}
\frac{16}{5} \frac{1}{2^m} \Big(\exp\Big(\frac{4\pi}{\sqrt{N}}\Big) -1\Big)+ \frac{33}{4} \frac{1}{(m-1)!} 
\Big(\frac{2\pi}{\sqrt{N}}\Big)^{m-1} < \exp\Big(-\frac{2\pi}{\sqrt{N}}\Big). 
\end{equation} 
For values of $m$ at least as large as the figure in the first row, the table below gives 
a bound $N(m)$ such that estimate \eqref{6.5} holds for all $N\ge N(m)$: 

\begin{center}
\begin{tabular} { |c || c | c |c|c|c|c|c|c|c|c|c|c|c|c|c|c|c|c|c|c|}
\hline
   $m$&  29& 21& 18& 16& 14 & 13& 12& 11& 10 & 9&8&7 \\
\hline
  $N(m)$& 1&  2&  3&  4&  5& 6& 7& 9& 11&14& 20& 28\\
  \hline
  \end{tabular}
  \end{center} 
  We  used {\text {\tt sage}} to confirm Theorem~\ref{NHecke} for the finitely many newforms missed by
  (\ref{6.5}).

\section{Proof of Theorem \ref{equi}.}\label{Equidistribution} 

\noindent The weight $4$ case was already treated in Section 3.  
For $m\ge 2$ (that is weights $k\ge 6$), the argument in Section 6 shows that for $z=e^{i\theta}$ on 
the unit circle we have 
$$ 
Q_f(z) = \exp\Big(im\theta + \frac{2\pi}{\sqrt{N}} e^{-i\theta}\Big) + O\Big( \frac{1}{2^m \sqrt{N}}\Big). 
$$ 
Thus we have that
$$ 
\text{Re}(Q_f(z)) = \exp\Big(\frac{2\pi}{\sqrt{N}} \cos\theta\Big) \cos\Big( m\theta -\frac{2\pi}{\sqrt{N}} \sin\theta\Big) + O\Big(\frac{1}{2^m\sqrt{N}}\Big).
$$ 
For $\theta \in [0,2\pi)$ the first term above vanishes when $m\theta -2\pi (\sin \theta)/\sqrt{N} = \frac{\pi}{2} + \ell \pi$ with $0\le \ell \le 2m-1$.  For such a point $\theta_\ell$, if we consider the values at $\theta_\ell - C/(2^m\sqrt{N})$ and $\theta_\ell +C/(2^m \sqrt{N})$ for a suitable constant $C>0$ (and if $2^m \sqrt{N}$ is large enough) then 
Re$(Q_f(z))$ has differing signs at these points, and hence a zero in between.  When $\epsilon(f)=1$, the zeros of the period polynomial $r_f(z)$ are located at $1/(i\sqrt{N})$ times the zeros of Re$(Q_f(z))$, and this proves Theorem \ref{equi} in this case.  The case when $\epsilon(f)=-1$ corresponds to Im$(Q_f(z)),$ and a similar argument applies here.

\section{Remarks on the calculations}

\noindent In the previous sections we  proved Theorem~\ref{NHecke} for $k=4, 6$ and $k\geq 42$.
For $8\leq k\leq 40$ finitely many newforms remain to complete the proof 
(see the discussions after (5.3) and (6.5)). 
We used inequality (5.3) for $8\leq k\leq 14$.
The most levels remain for weight $k=8$; we are left to consider those newforms with $N\leq 141$. 
For weights $16\leq k\leq 40$ we employed (6.5). The table after (6.5) gives the remaining levels.  The most levels  remain for weight $k=16$; we
are left with $N\leq 27$.

Using {\text {\tt sage}}
we confirmed Theorem~\ref{NHecke} for these remaining newforms. Running the commands {\text {\tt CuspForms}} and {\text {\tt newforms}} on a laptop,
we had no difficulty computing these newforms.
We then used
Dokchitser's {\tt sage} $L$-functions calculator to
compute the values $\Lambda(f,1),\dots, \Lambda(f,k-1)$ to very high precision. We tested inequality (5.2), and found that it held for many of the remaining newforms. However, (5.2) fails for some newforms with low weight and level.  For example, (5.2) fails for some weight $k=8$ newforms with $N\in \{2, 3, 5-17, 19\}$.

For the forms which do not satisfy (5.2), we  computed
the trigonometric polynomials and checked that on the unit disk that they have the required
number of sign changes for the truth of Theorem~\ref{NHecke}. 
As an example, consider the unique newform $f\in S_{10}(\Gamma_0(12)).$
We have that
\begin{eqnarray*}
&&L(f,1)\approx 343.041936898889, L(f,2)\approx 140.422365373567, 
L(f,3)\approx 32.9164131544840, \\
&&L(f,4)\approx 6.41626479306637, 
L(f,5)\approx 1.71889934464323, \ldots,
\end{eqnarray*}
which in turn implies for $z=e^{i\theta}$ that
\begin{displaymath}
\begin{split}
(P_{f}(z)+&\epsilon(f) P_{f}(1/z))/2
\approx 189.128932153817\cos(4\theta) + 341.466246468159\cos(3\theta) \\
&+308.910589184567\cos(2\theta) + 199.188643773093\cos(\theta) +73.5501402820398.
\end{split}
\end{displaymath}
This has four zeros for $\theta \in [0,\pi)$ as required, and they are in the intervals
\begin{eqnarray*}
\left(\frac{4\pi}{20},\frac{5\pi}{20}\right),
\left(\frac{10\pi}{20},\frac{11\pi}{20}\right),
\left(\frac{14\pi}{20},\frac{15\pi}{20}\right),
\left(\frac{18\pi}{20},\frac{19\pi}{20}\right).
\end{eqnarray*}

\end{document}